\documentclass{amsart}

\usepackage{amsmath,amsthm,amsfonts}
\usepackage{amssymb}
\usepackage[all]{xy}
\usepackage{verbatim}

\theoremstyle{plain}
\newtheorem{thm}{Theorem}
\newtheorem*{ithm}{Theorem}

\newtheorem{prop}[thm]{Proposition}

\theoremstyle{definition}


\newcommand{\acat}[1]{\mathsf{a}_{#1}}
\newcommand{\cacat}[1]{\mathsf{\hat{a}}_{#1}}
\newcommand{\lcat}{\mathsf{l}}
\newcommand{\sets}{\mathsf{Sets}}

\newcommand{\sfam}{\mathcal{M}}

\newcommand{\defs}{\mathsf{Def}_{\sfam}}
\newcommand{\defm}[1]{\mathsf{Def}_{#1}}

\DeclareMathOperator{\enm}{End}
\DeclareMathOperator{\ext}{Ext}
\DeclareMathOperator{\gr}{gr}
\DeclareMathOperator{\hmm}{Hom}
\DeclareMathOperator{\id}{id}

\DeclareMathOperator*{\osum}{\oplus}

\DeclareMathOperator{\rad}{rad}

\begin{document}

\title[The Generalized Burnside Theorem]{The Generalized Burnside
Theorem in noncommutative deformation theory}
\author{Eivind Eriksen}
\maketitle

\begin{abstract}
Let $A$ be an associative algebra over a field $k$, and let $\sfam$ be
a finite family of right $A$-modules. Study of the noncommutative
deformation functor $\defs: \acat r \to \sets$ of the family $\sfam$
leads to the construction of the algebra $\mathcal O^A(\sfam)$ of
observables and the Generalized Burnside Theorem, due to Laudal
\cite{laud02}. In this paper, we give an overview of aspects of
noncommutative deformations closely connected to the Generalized
Burnside Theorem.
\end{abstract}

\section{Introduction}

Let $k$ be a field and let $A$ be an associative $k$-algebra. For any
right $A$-module $M$, there is a commutative deformation functor $\defm
M: \lcat \to \sets$ defined on the category $\lcat$ of local Artinian
commutative $k$-algebras with residue field $k$. We recall that for an
algebra $R$ in $\lcat$, a deformation of $M$ to $R$ is a pair $(M_R,
\tau)$, where $M_R$ is an $R$-$A$ bimodule (on which $k$ acts
centrally) that is $R$-flat, and $\tau: k \otimes_R M_R \to M$ is an
isomorphism of right $A$-modules.

Let $\acat r$ be the category of $r$-pointed Artinian $k$-algebras for
$r \ge 1$, the natural noncommutative generalization of $\lcat$. We
recall that an algebra $R$ in $\acat r$ is an Artinian ring, together
with a pair of structural ring homomorphisms $f: k^r \to R$ and $g: R
\to k^r$ with $g \circ f = \id$, such that the radical $I(R) = \ker(g)$
is nilpotent. Any algebra $R$ in $\acat r$ has $r$ simple left modules
of dimension one, the natural projections $\{ k_1, \dots, k_r \}$ of
$k^r$.

In Laudal \cite{laud02}, a noncommutative deformation functor $\defs:
\acat r \to \sets$ of a finite family $\sfam = \{ M_1, \dots, M_r \}$
of right $A$-modules was introduced, as a generalization of the
commutative deformation functor $\defm M: \lcat \to \sets$ of a right
$A$-module $M$. In the case $r = 1$, this generalization is completely
natural, and can be defined word for word as in the commutative case.
The generalization to the case $r > 1$ is less obvious and has
further-reaching consequences, but is still very natural. A deformation
of $\sfam$ to $R$ is defined to be a pair $(M_R, \{ \tau_i \}_{1\le i
\le r})$, where $M_R$ is an $R$-$A$ bimodule (on which $k$ acts
centrally) that is $R$-flat, and $\tau_i: k_i \otimes_R M_R \to M_i$ is
an isomorphism of right $A$-modules for $1 \le i \le r$. We remark that
$M_R$ is $R$-flat if and only if
\begin{equation*}
M_R \cong (R_{ij} \otimes_k M_j) = \left(\begin{matrix} R_{11} \otimes_k
M_1 & R_{12} \otimes_k M_2 & \dots & R_{1r} \otimes_k M_r \\ R_{21}
\otimes_k M_1 & R_{22} \otimes_k M_2 & \dots & R_{2r} \otimes_k M_r \\
\vdots & \vdots & \ddots & \dots \\ R_{r1} \otimes_k M_1 & R_{r2}
\otimes_k M_2 & \dots & R_{rr} \otimes_k M_r \end{matrix}\right)
\end{equation*}
considered as a left $R$-module, and that a deformation in $\defs(R)$
may be thought of as a right multiplication $A \to \enm_R(M_R)$ of $A$
on the left $R$-module $M_R$ that lifts the multiplication $\rho: A \to
\osum_i \, \enm_k(M_i)$ of $A$ on the family $\sfam$.

There is an obstruction theory for $\defs$, generalizing the
obstruction theory for the commutative deformation functor. Hence there
exists a formal moduli $(H, M_H)$ for $\defs$ (assuming a mild
condition on $\sfam$). We consider the \emph{algebra of observables}
$\mathcal O^A(\sfam) = \enm_H(M_H) \cong (H_{ij} \widehat \otimes_k
\hmm_k(M_i,M_j) )$ and the commutative diagram
\begin{equation*}
\xymatrix{
A \ar[r]^-{\eta} \ar[rd]_-{\rho} & \mathcal O^A(\sfam) \ar[d]^-{\pi} \\
& \displaystyle \osum_{1 \le i \le r} \enm_k(M_i)
}
\end{equation*}
given by the versal family $M_H \in \defs(H)$. The algebra $B =
\mathcal O^A(\sfam)$ has an induced right action on the family $\sfam$
extending the action of $A$, and we may consider $\sfam$ as a family of
right $B$-modules. In fact, $\sfam$ is the family of simple $B$-modules
since $\pi$ can be identified with the quotient morphism $B \to B/\rad
B$.

When $A$ is an algebra of finite dimension over an algebraically closed
field $k$ and $\sfam$ is the family of simple right $A$-modules, Laudal
proved the \emph{Generalized Burnside Theorem} in Laudal \cite{laud02},
generalizing the structure theorem for semi-simple algebras and the
classical Burnside Theorem. Laudal's result be stated in the following
form:

\begin{ithm}[The Generalized Burnside Theorem]
Let $A$ be a finite-dimensional algebra over a field $k$, and let
$\sfam = \{ M_1, M_2, \dots, M_r \}$ be the family of simple right
$A$-modules. If $\enm_A(M_i) = k$ for $1 \le i \le r$, then $\eta: A
\to \mathcal O^A(\sfam)$ is an isomorphism. In particular, $\eta$ is an
isomorphism when $k$ is algebraically closed.
\end{ithm}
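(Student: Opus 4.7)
The approach is a filtration comparison between $A$ and $B := \mathcal{O}^A(\sfam)$. The algebra $A$ carries the $\rad A$-adic filtration, which is separated because $\rad A$ is nilpotent (as $A$ is finite-dimensional); the algebra $B$ carries the $J$-adic filtration with $J = \rad B = \ker\pi$, along which $B$ is complete since $H$ is profinite. The relation $\pi\circ\eta = \rho$ shows immediately that $\eta$ is filtered: $\eta(\rad A) \subseteq J$. The plan is to check that the induced map on associated graded pieces is an isomorphism, after which completeness of the target and nilpotence of the radical on the source will force $\eta$ itself to be an isomorphism.

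For the zeroth graded piece, the question reduces to whether $\rho: A/\rad A \to \bigoplus_i \enm_k(M_i)$ is an isomorphism, which is precisely the content of the classical Wedderburn--Artin and Burnside theorems under the hypothesis $\enm_A(M_i)=k$ — that hypothesis is exactly what rules out the appearance of a nontrivial division ring in the Wedderburn decomposition. When $k$ is algebraically closed, Schur's lemma supplies this hypothesis automatically, giving the final clause of the statement. For the first graded piece, the tangent theory underlying the construction of $H$ identifies $I(H)_{ij}/I(H)^2_{ij}$ with $\ext^1_A(M_i,M_j)^*$, whence $J/J^2$ is expressible in terms of these Ext groups and the spaces $\hmm_k(M_i,M_j)$; on the other side, the Ext-quiver description of a finite-dimensional algebra with absolutely simple modules identifies $\rad(A)/\rad(A)^2$ in the same terms, and one then checks that $\eta$ respects these canonical identifications.

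For higher graded pieces, one continues by induction. The $n$-th piece of $B$ is governed by the matric Massey product structure on $\bigoplus_{i,j}\ext_A^\ast(M_i,M_j)$ which, via the obstruction calculus, defines the relations of $H$; on the $A$-side, the filtered multiplication $\rad(A)^n \otimes \rad(A)^m \to \rad(A)^{n+m}$ is determined by the same Massey products, since both are ultimately read off from a single minimal resolution of $\sfam$ as a family of $A$-modules. Verifying this agreement at every level is the main technical obstacle, and is precisely where the versality of $(H,M_H)$ is used: the Massey products coming from $A$ itself exhibit $A/\rad(A)^{n+1}$ as a quotient of $B/J^{n+1}$ compatibly with $\eta$ for every $n$, and these quotient maps assemble into a two-sided inverse to $\eta$ in the limit.
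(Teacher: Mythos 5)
Your overall frame --- compare $A$ and $B=\mathcal O^A(\sfam)$ through the $\rad(A)$-adic and $J$-adic filtrations, with degree zero handled by Burnside/Wedderburn and degree one by a tangent-level identification involving $\ext^1_A(M_i,M_j)$ --- matches the paper's. The divergence, and the gap, is in how you finish. You set out to prove that $\gr(\eta)_q$ is an isomorphism for \emph{every} $q$, and for $q\ge 2$ your argument is that the multiplication on both sides is ``governed by the same matric Massey products'' because both are ``read off from a single minimal resolution.'' That is an assertion of the theorem's content, not a proof of it: $H$ (hence $B$) is \emph{constructed} from the Massey-product structure on $\osum_{i,j} \ext^*_A(M_i,M_j)$, whereas $A$ is simply given, and showing that the given multiplication on $\gr(A)$ agrees with the constructed one in degrees $\ge 2$ is exactly what must be established. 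Your proposed fix --- that versality yields quotient maps $B/J^{n+1}\to A/\rad(A)^{n+1}$ assembling into a two-sided inverse --- is not substantiated; the versal property produces morphisms out of $H$ toward algebras in $\acat r$ carrying deformations of $\sfam$, and $A/\rad(A)^{n+1}$ is not such an object when some $M_i$ has dimension $>1$ (its semisimple quotient is $\osum_i\enm_k(M_i)$, not $k^r$), so a Morita-type mediation is needed that you do not supply.

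The paper avoids all of this by a cheaper split. Since $J^q/J^{q+1}$ is generated by degree-one elements over degree zero, isomorphisms in degrees $0$ and $1$ already force $\gr(\eta)_q$ to be \emph{surjective} for all $q$, hence $\eta$ is surjective by completeness; no control of higher Massey products is needed for surjectivity. Injectivity is then obtained by a separate and genuinely different mechanism: the right regular module $A_A$ admits a composition series with simple quotients, i.e.\ it is an iterated extension of the modules in $\sfam$, and the obstruction-calculus description of $\ker(\eta)$ (Laudal, Theorem 3.2 and Corollary 3.1) then forces $\ker(\eta)=0$. Your proposal has no counterpart to this injectivity step, and without it the degree-$0$ and degree-$1$ information you can actually verify yields only surjectivity of $\eta$. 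Note also that even your degree-one step tacitly requires lifting the semisimple quotient into $A$, which is the Wedderburn--Malcev ingredient the paper cites for $\gr(\eta)_1$.
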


Let $A$ be an algebra of finite dimension over an algebraically closed
field $k$ and let $\sfam$ be any finite family of right $A$-modules of
finite dimension over $k$. Then the algebra $B = \mathcal O^A(\sfam)$
has the property that $\eta_B: B \to \mathcal O^B(\sfam)$ is an
isomorphism, or equivalently, that the assignment $(A, \sfam) \mapsto
(B, \sfam)$ is a closure operation. This means that the family $\sfam$
has exactly the same module-theoretic properties, in terms of (higher)
extensions and Massey products, considered as a family of modules over
$B$ as over $A$.

\section{Noncommutative deformations of modules}

Let $k$ be a field. For any integer $r \ge 1$, we consider the category
$\acat r$ of $r$-pointed Artinian $k$-algebras. We recall that an
object in $\acat r$ is an Artinian ring $R$, together with a pair of
structural ring homomorphisms $f: k^r \to R$ and $g: R \to k^r$ with $g
\circ f = \id$, such that the radical $I(R) = \ker(g)$ is nilpotent.
The morphisms of $\acat r$ are the ring homomorphisms that commute with
the structural morphisms. It follows from this definition that $I(R)$
is the Jacobson radical of $R$, and therefore that the simple left
$R$-modules are the projections $\{ k_1, \dots, k_r \}$ of $k^r$.

Let $A$ be an associative $k$-algebra. For any family $\sfam = \{ M_1,
\dots, M_r \}$ of right $A$-modules, there is a noncommutative
deformation functor $\defs: \acat r \to \sets$, introduced in Laudal
\cite{laud02}; see also Eriksen \cite{erik06}. For an algebra $R$ in
$\acat r$, we recall that a deformation of $\sfam$ over $R$ is a pair
$(M_R, \{ \tau_i \}_{1\le i \le r})$, where $M_R$ is an $R$-$A$
bimodule (on which $k$ acts centrally) that is $R$-flat, and $\tau_i:
k_i \otimes_R M_R \to M_i$ is an isomorphism of right $A$-modules for
$1 \le i \le r$. Moreover, $(M_R, \{ \tau_i \} ) \sim (M'_R, \{
\tau'_i\} )$ are equivalent deformations over $R$ if there is an
isomorphism $\eta: M_R \to M'_R$ of $R$-$A$ bimodules such that $\tau_i
= \tau'_i \circ (1 \otimes \eta)$ for $1 \le i \le r$. One may prove
that $M_R$ is $R$-flat if and only if
\begin{equation*}
M_R \cong (R_{ij} \otimes_k M_j) = \left(\begin{matrix} R_{11} \otimes_k
M_1 & R_{12} \otimes_k M_2 & \dots & R_{1r} \otimes_k M_r \\ R_{21}
\otimes_k M_1 & R_{22} \otimes_k M_2 & \dots & R_{2r} \otimes_k M_r \\
\vdots & \vdots & \ddots & \dots \\ R_{r1} \otimes_k M_1 & R_{r2}
\otimes_k M_2 & \dots & R_{rr} \otimes_k M_r \end{matrix}\right)
\end{equation*}
considered as a left $R$-module, and a deformation in $\defs(R)$ may be
thought of as a right multiplication $A \to \enm_R(M_R)$ of $A$ on the
left $R$-module $M_R$ that lifts the multiplication $\rho: A \to
\osum_i \, \enm_k(M_i)$ of $A$ on the family $\sfam$.

Let us assume that $\sfam$ is a \emph{swarm}, i.e. that
$\ext^1_A(M_i,M_j)$ has finite dimension over $k$ for $1 \le i,j \le
r$. Then $\defs$ has a pro-representing hull or a formal moduli $(H,
M_H)$, see Laudal \cite{laud02}, Theorem 3.1. This means that $H$ is a
complete $r$-pointed $k$-algebra in the pro-category $\cacat r$, and
that $M_H \in \defs(H)$ is a family defined over $H$ with the following
versal property: For any algebra $R$ in $\acat r$ and any deformation
$M_R \in
\defs(R)$, there is a homomorphism $\phi: H \to R$ such that
$\defs(\phi)(M_H) = M_R$. The formal moduli $(H, M_H)$ is unique up to
non-canonical isomorphism. However, the morphism $\phi$ is not uniquely
determined by $(R,M_R)$.

When $\sfam$ is a swarm with formal moduli $(H,M_H)$, right
multiplication on the $H$-$A$ bimodule $M_H$ by elements in $A$
determines an algebra homomorphism
    \[ \eta: A \to \enm_H(M_H) \]
We write $\mathcal O^A(\sfam) = \enm_H(M_H)$ and call it the
\emph{algebra of observables}. Since $M_H$ is $H$-flat, we have that
$\enm_H(M_H) \cong ( H_{ij} \widehat \otimes_k \hmm_k(M_i,M_j) )$, and
it follows that $\mathcal O^A(\sfam)$ is explicitly given as the matrix
algebra
\begin{equation*}
\left( \begin{matrix} H_{11} \widehat
\otimes_k \enm_k(M_1) & H_{12} \widehat \otimes_k \hmm_k(M_1,M_2) &
\dots & H_{1r} \widehat \otimes_k \hmm_k(M_1,M_r) \\ H_{21}
\widehat \otimes_k \hmm_k(M_2,M_1) & H_{22} \widehat \otimes_k
\enm_k(M_2) & \dots & H_{2r} \widehat \otimes_k \hmm_k(M_2,M_r) \\
\vdots & \vdots & \ddots & \dots \\ H_{r1} \widehat \otimes_k
\hmm_k(M_r,M_1) & H_{r2} \widehat \otimes_k \hmm_k(M_r,M_2) & \dots
& H_{rr} \widehat \otimes_k \enm_k(M_r) \end{matrix}\right)
\end{equation*}
Let us write $\rho_i: A \to \enm_k(M_i)$ for the structural algebra
homomorphism defining the right $A$-module structure on $M_i$ for $1
\le i \le r$, and
    \[ \rho: A \to \osum_{1 \le i \le r} \, \enm_k(M_i) \]
for their direct sum. Since $H$ is a complete $r$-pointed algebra in
$\cacat r$, there is a natural morphism $H \to k^r$, inducing an
algebra homomorphism
    \[ \pi: \mathcal O^A(\sfam) \to \osum_{1 \le i \le r}
    \enm_k(M_i) \]
By construction, there is a right action of $\mathcal O^A(\sfam)$ on
the family $\sfam$ extending the right action of $A$, in the sense that
the diagram
\begin{equation*}
\xymatrix{
A \ar[r]^-{\eta} \ar[rd]_-{\rho} & \mathcal O^A(\sfam) \ar[d]^-{\pi} \\
& \displaystyle \osum_{1 \le i \le r} \enm_k(M_i)
}
\end{equation*}
commutes. This makes it reasonable to call $\mathcal O^A(\sfam)$ the
algebra of observables.

\section{The generalized Burnside theorem}

Let $k$ be a field and let $A$ be a finite-dimensional associative
$k$-algebra. Then the simple right modules over $A$ are the simple
right modules over the semi-simple quotient algebra $A/\rad(A)$, where
$\rad(A)$ is the Jacobson radical of $A$. By the classification theory
for semi-simple algebras, it follows that there are finitely many
non-isomorphic simple right $A$-modules.

We consider the noncommutative deformation functor $\defs: \acat r \to
\sets$ of the family $\sfam = \{ M_1, M_2, \dots, M_r \}$ of simple
right $A$-modules. Clearly, $\sfam$ is a swarm, hence $\defs$ has a
formal moduli $(H, M_H)$, and we consider the commutative diagram
\begin{equation*}
\xymatrix{
A \ar[r]^-{\eta} \ar[rd]_-{\rho} & \mathcal O^A(\sfam) \ar[d]^-{\pi} \\
& \displaystyle \osum_{1 \le i \le r} \enm_k(M_i)
}
\end{equation*}
By a classical result, due to Burnside, the algebra homomorphism $\rho$
is surjective when $k$ is algebraically closed. This result is
conveniently stated in the following form:

\begin{thm}[Burnside's Theorem]
If $\enm_A(M_i) = k$ for $1 \le i \le r$, then $\rho$ is surjective. In
particular, $\rho$ is surjective when $k$ is algebraically closed.
\end{thm}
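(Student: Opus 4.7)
The plan is to reduce to the semisimple case and then invoke Artin--Wedderburn. First, every simple right $A$-module is annihilated by the Jacobson radical $\rad(A)$, so $\rho$ factors through the semisimple quotient $\bar A = A/\rad(A)$; write $\bar\rho : \bar A \to \osum_{1 \le i \le r} \enm_k(M_i)$ for the induced map.

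Since $\sfam = \{M_1, \dots, M_r\}$ is a complete set of representatives for the isomorphism classes of simple right $A$-modules, it is also one for $\bar A$, and Artin--Wedderburn gives a canonical decomposition
\[
    \bar A \;\cong\; \prod_{i=1}^r \enm_{D_i}(M_i),
\]
where $D_i = \enm_A(M_i)$ acts on $M_i$ from the opposite side. This isomorphism is realized by the product of the structural maps $\bar\rho_i : \bar A \to \enm_{D_i}(M_i)$, since each primitive central idempotent of $\bar A$ acts as the identity on the corresponding $M_i$ and as zero on the other $M_j$. Under the hypothesis $D_i = k$, we have $\enm_{D_i}(M_i) = \enm_k(M_i)$, so the right-hand side becomes precisely $\osum_{1\le i \le r} \enm_k(M_i)$ and the isomorphism is none other than $\bar\rho$. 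Hence $\bar\rho$ is bijective and $\rho$ is surjective with kernel $\rad(A)$.

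For the algebraically closed case I would invoke Schur's lemma: any $A$-linear endomorphism $\varphi$ of the finite-dimensional simple module $M_i$ has an eigenvalue $\lm \in k$, and $\ker(\varphi - \lm \cdot \id)$ is a nonzero submodule of $M_i$, hence equals $M_i$ by simplicity. Thus $\varphi = \lm \cdot \id$, so $\enm_A(M_i) = k$ for $1 \le i \le r$ and the preceding part applies.

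The one step that requires a little care is the identification of the abstract Artin--Wedderburn isomorphism with $\bar\rho$; I expect to handle this by tracing the primitive central idempotents of $\bar A$ through the decomposition and using that each simple factor $\enm_{D_i}(M_i)$ acts faithfully on $M_i$ and trivially on $M_j$ for $j \neq i$. Everything else is standard bookkeeping for semisimple algebras.
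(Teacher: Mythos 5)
Your proposal is correct and follows essentially the same route as the paper: factor $\rho$ through $A/\rad(A)$, identify the induced map with the Artin--Wedderburn decomposition (the paper's ``classification theory for semi-simple algebras''), and deduce $\enm_A(M_i)=k$ over an algebraically closed field via Schur's lemma for finite-dimensional simple modules. The extra care you take in matching the abstract decomposition with $\bar\rho$ via primitive central idempotents is a sound filling-in of a detail the paper leaves implicit.
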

\begin{proof}
There is an obvious factorization $A \to A/\rad(A) \to \osum_i \,
\enm_k(M_i)$ of $\rho$. If $\enm_A(M_i)=k$ for $1 \le i \le r$, then
$A/\rad(A) \to \osum_i \, \enm_k(M_i)$ is an isomorphism by the
classification theory for semi-simple algebras. Since $\enm_A(M_i)$ is
a division ring of finite dimension over $k$, it is clear that
$\enm_A(M_i) = k$ whenever $k$ is algebraically closed.
\end{proof}

Let us write $\overline \rho: A/\rad A \to \osum_i \, \enm_k(M_i)$ for
the algebra homomorphism induced by $\rho$. We observe that $\rho$ is
surjective if and only if $\overline \rho$ is an isomorphism. Moreover,
let us write $J = \rad(\mathcal O^A(\sfam))$ for the Jacobson radical
of $\mathcal O^A(\sfam)$. Then we see that
    \[ J = ( \rad(H)_{ij} \widehat \otimes_k \hmm_k(M_i,M_j) ) =
    \ker(\pi) \]
Since $\rho(\rad A) = 0$ by definition, it follows that $\eta(\rad A)
\subseteq J$. Hence there are induced morphisms
    \[ \gr(\eta)_q: \rad(A)^q/\rad(A)^{q+1} \to J^q/J^{q+1} \]
for all $q \ge 0$. We may identify $\gr(\eta)_0$ with $\overline \rho$,
since $\mathcal O^A(\sfam)/J \cong \osum_i \, \enm_k(M_i)$. The
conclusion in Burnside's Theorem is therefore equivalent to the
statement that $\gr(\eta)_0$ is an isomorphism.

\begin{thm}[The Generalized Burnside Theorem]
Let $A$ be a finite-dimensional algebra over a field $k$, and let
$\sfam = \{ M_1, M_2, \dots, M_r \}$ be the family of simple right
$A$-modules. If $\enm_A(M_i) = k$ for $1 \le i \le r$, then $\eta: A
\to \mathcal O^A(\sfam)$ is an isomorphism. In particular, $\eta$ is an
isomorphism when $k$ is algebraically closed.
\end{thm}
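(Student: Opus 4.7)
The plan is to show that the associated graded map $\gr(\eta)_q: \rad(A)^q/\rad(A)^{q+1} \to J^q/J^{q+1}$ is an isomorphism for every $q \ge 0$. Given this, the conclusion follows formally: $A$ is finite-dimensional, so $\rad(A)^N = 0$ for some $N$, which forces $J^q/J^{q+1} = 0$ for $q \ge N$; since $\mathcal{O}^A(\sfam)$ is complete in the $J$-adic topology (as $H$ is pro-Artinian with $\bigcap \rad(H)^n = 0$), this forces $J^N = 0$. Induction on $n$ then shows that $\eta$ induces an isomorphism $A/\rad(A)^n \to \mathcal{O}^A(\sfam)/J^n$ for all $n$, and taking $n = N$ gives the theorem.

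The case $q = 0$ is the identification of $\gr(\eta)_0$ with $\overline{\rho}$ already recorded in the paper, combined with Burnside's theorem. For $q = 1$ I would identify both sides explicitly. The right side decomposes as
\[ J/J^2 \;\cong\; \osum_{i,j} \bigl( \rad(H)_{ij}/\rad(H)_{ij}^2 \bigr) \otimes_k \hmm_k(M_i, M_j), \]
and the tangent-space description of the formal moduli gives $\rad(H)_{ij}/\rad(H)_{ij}^2 \cong \ext^1_A(M_i, M_j)^*$. On the left, because $\enm_A(M_i) = k$ makes $A/\rad A$ a separable $k$-algebra, Wedderburn--Malcev supplies a subalgebra lifting $B_0 := \osum \enm_k(M_i) \hookrightarrow A$, and the standard quiver-theoretic description identifies the $B_0$-bimodule $\rad(A)/\rad(A)^2$ with the same tensor sum. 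The map $\gr(\eta)_1$ is then the canonical comparison, hence an isomorphism.

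For $q \ge 2$, both $\gr(A)$ and $\gr(\mathcal{O}^A(\sfam))$ are quotients of the tensor algebra $T_{B_0}(V)$ on the common degree-one part $V = \rad(A)/\rad(A)^2 \cong J/J^2$. On the deformation side, Laudal's construction of $H$ exhibits $\gr(H)$ (and thus $\gr(\mathcal{O}^A(\sfam))$) as this tensor algebra modulo an ideal of obstructions, namely the matric Massey products of $\sfam$. On the algebra side, a Gabriel-type presentation of $A$ over the Wedderburn--Malcev subalgebra $B_0$ presents $\gr(A)$ as the same tensor algebra modulo an ideal of relations, and these relations can likewise be read off as the matric Massey products of the simples computed from their projective resolutions.

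I expect the main obstacle to be precisely the matching of these two ideals of relations, i.e.\ showing that the obstruction calculus governing the formal moduli $H$ produces the same defining relations as the minimal presentation of the finite-dimensional algebra $A$ over its Wedderburn--Malcev subalgebra. This is the conceptual heart of the theorem. Granting it, $\gr(\eta)_q$ is an isomorphism in every degree, and the filtration argument of the first paragraph delivers the result.
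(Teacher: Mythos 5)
Your reduction to showing that $\gr(\eta)_q$ is an isomorphism for all $q$ is formally sound, and your treatment of $q=0$ (Burnside) and $q=1$ (Wedderburn--Malcev plus the tangent-space description $\rad(H)_{ij}/\rad(H)^2_{ij} \cong \ext^1_A(M_i,M_j)^*$) agrees with the paper. But for $q \ge 2$ you explicitly grant the decisive step, namely that the ideal of obstructions (matric Massey products) cutting out $H$ inside the completed tensor algebra coincides with the ideal of relations in a minimal presentation of $A$ over its Wedderburn--Malcev subalgebra. This is not a routine verification to be deferred; it is precisely the content that a proof must supply, so as written the argument is incomplete at its central point.

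The paper sidesteps this comparison entirely by splitting the claim into surjectivity and injectivity. Surjectivity needs only $\gr(\eta)_0$ and $\gr(\eta)_1$: since $H$ is constructed as a quotient of the completed tensor algebra on the dual tangent spaces, the radical $J$ is topologically generated in degree one over $\mathcal O^A(\sfam)/J$, so surjectivity of $\gr(\eta)$ in degrees $0$ and $1$ propagates to every degree and, by completeness of both algebras in their radical-adic topologies, to $\eta$ itself. Injectivity is handled by a separate mechanism: $\ker(\eta)$ is controlled by the obstruction calculus of $\defs$ (Laudal, Theorem 3.2), and since the right regular module $A_A$ admits a composition series with quotients in $\sfam$, i.e.\ is an iterated extension of the simple modules, any element of $\ker(\eta)$ must annihilate $A_A$ and is therefore zero (Laudal, Corollary 3.1). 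If you wish to salvage your route, you would have to prove the matching of the two ideals of relations directly, which amounts to re-deriving these two facts in a harder form; the cleaner repair is to replace your $q \ge 2$ step by this surjectivity-plus-injectivity argument.
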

\begin{proof}
It is enough to prove that $\eta$ is injective and that $\gr(\eta)_q$
is an isomorphism for $q = 0$ and $q = 1$, since $A$ and $\mathcal
O^A(\sfam)$ are complete in the $\rad(A)$-adic and $J$-adic topologies.
By Burnside's Theorem, we know that $\gr(\eta)_0$ is an isomorphism. To
prove that $\eta$ is injective, let us consider the kernel $\ker(\eta)
\subseteq A$. It is determined by the obstruction calculus of $\defs$;
see Laudal \cite{laud02}, Theorem 3.2 for details. When $A$ is
finite-dimensional, the right regular $A$-module $A_A$ has a
decomposition series
    \[ 0 = F_0 \subseteq F_1 \subseteq \dots \subseteq F_n = A_A \]
with $F_p/F_{p-1}$ a simple right $A$-module for $1 \le p \le n$. That
is, $A_A$ is an \emph{iterated extension} of the modules in $\sfam$.
This implies that $\eta$ is injective; see Laudal \cite{laud02},
Corollary 3.1. Finally, we must prove that $\gr(\eta)_1:
\rad(A)/\rad(A)^2 \to J/J^2$ is an isomorphism. This follows from the
Wedderburn-Malcev Theorem; see Laudal \cite{laud02}, Theorem 3.4 for
details.
\end{proof}

\section{Properties of the algebra of observables}

Let $A$ be a finite-dimensional algebra over a field $k$, and let
$\sfam = \{ M_1, \dots, M_r \}$ be any family of right $A$-modules of
finite dimension over $k$. Then $\sfam$ is a swarm, and we denote the
algebra of observables by $B = \mathcal O^A(\sfam)$. It is clear that
    \[ B/\rad(B) \cong \osum_i \, \enm_k(M_i) \]
is semi-simple, and it follows that $\sfam$ is the family of simple
right $B$-modules. In fact, one may show that $\sfam$ is a swarm of
$B$-modules, since $B$ is complete and $B/(\rad B)^n$ has finite
dimension over $k$ for all positive integers $n$.

\begin{prop}
If $k$ is an algebraically closed field, then the algebra homomorphism
$\eta_B: B \to \mathcal O^B(\sfam)$ is an isomorphism.
\end{prop}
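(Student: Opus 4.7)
The plan is to show that $(H, M_H)$ pro-represents the noncommutative deformation functor $\defm{\sfam}^B: \acat r \to \sets$ of $\sfam$ as a family of right $B$-modules, just as it pro-represents $\defm{\sfam}^A$. Granting this, the algebra of observables satisfies $\mathcal O^B(\sfam) = \enm_H(M_H) = \mathcal O^A(\sfam) = B$, and $\eta_B: B \to \mathcal O^B(\sfam)$ is the canonical identification. The key input is therefore a natural isomorphism $\defm{\sfam}^B \cong \defm{\sfam}^A$ of functors on $\acat r$, furnished by restriction of scalars along $\eta: A \to B$.

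First I would verify the Schur-type input: since $k$ is algebraically closed and each $M_i$ is a simple right $B$-module, the division algebra $\enm_B(M_i)$ is a finite-dimensional extension of $k$ and hence equals $k$, so the target algebra $\osum_i \enm_k(M_i)$ is the same in the $A$- and $B$-deformation problems. Restriction along $\eta$ then gives a natural transformation $\defm{\sfam}^B(R) \to \defm{\sfam}^A(R)$ for each $R \in \acat r$: a right $B$-action on an $R$-flat bimodule $M_R$ lifting $\osum_i \enm_k(M_i)$ restricts to a right $A$-action lifting $\rho$. For the inverse direction, given $(M_R, \{\tau_i\}) \in \defm{\sfam}^A(R)$, I would invoke the versal property of $(H, M_H)$ to produce $\phi: H \to R$ with $\defm{\sfam}^A(\phi)(M_H) \cong M_R$; the tautological right action of $B = \enm_H(M_H)$ on $M_H$ (which commutes with the left $H$-action and extends the right $A$-action) then descends along $\phi$ to a right $B$-action on $M_R$ extending the given $A$-action, producing an element of $\defm{\sfam}^B(R)$.

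The hard part will be showing that this inverse construction is well-defined and truly inverse to restriction. The homomorphism $\phi$ furnished by the versal property is not uniquely determined by $M_R$, so I must verify that the resulting $B$-deformation is independent of the choice of $\phi$ up to equivalence in $\defm{\sfam}^B(R)$, and that the two constructions are mutually inverse. At the infinitesimal level this amounts to the identifications $\ext^n_A(M_i, M_j) \cong \ext^n_B(M_i, M_j)$ for $n = 1, 2$, expressing that the tangent and obstruction spaces of the two deformation problems coincide; this is precisely the closure statement advertised in the introduction, and its verification is the main technical content of the argument. Once established, uniqueness of the pro-representing hull up to non-canonical isomorphism forces the formal moduli of $\defm{\sfam}^B$ to be $(H, M_H)$, and the proposition follows.
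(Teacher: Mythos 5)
Your strategy---to show that restriction of scalars along $\eta: A \to B$ gives a natural isomorphism $\defm{\sfam}^B \cong \defm{\sfam}^A$, so that both functors share the hull $(H, M_H)$ and hence $\mathcal O^B(\sfam) = \enm_H(M_H) = B$---is a plausible-looking route, but it contains a genuine gap that you flag and then do not fill, and as stated it is circular. The bijectivity of the restriction map, which you correctly reduce to the comparisons $\ext^n_A(M_i,M_j) \cong \ext^n_B(M_i,M_j)$ (plus compatibility of the Massey product structure), is exactly the ``closure statement'' advertised in the introduction; but in the paper that statement is a \emph{consequence} of this proposition, not an available input, so it cannot be invoked here. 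Concretely: your versal-property construction only gives \emph{surjectivity} of the restriction map $\defm{\sfam}^B(R) \to \defm{\sfam}^A(R)$ (for surjectivity the non-uniqueness of $\phi$ is harmless), but \emph{injectivity}---that a right $B$-action on $M_R$ extending a given $A$-action is unique up to equivalence of $B$-deformations, i.e.\ that an $R$-$A$-bimodule equivalence between two such lifts can be promoted to a $B$-linear one---is nowhere addressed, and it is not obvious, since in this section $\sfam$ is an arbitrary finite family and $\eta(A)$ need not be dense in $B$ in any useful sense. Without injectivity you cannot conclude that the two functors have the same tangent spaces, let alone isomorphic hulls.

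The paper's proof is quite different and avoids all of this: it applies the Generalized Burnside Theorem---already proved for finite-dimensional algebras---to the finite-dimensional quotients $B/\rad(B)^n$, whose family of simple modules is again $\sfam$ (since $B/\rad(B) \cong \osum_i \enm_k(M_i)$). This yields that the induced maps $B/\rad(B)^n \to C/\rad(C)^n$, where $C = \mathcal O^B(\sfam)$, are isomorphisms for all $n \ge 1$, and one concludes by passing to the limit, using that $B$ and $C$ are complete. Your proposal never invokes the Generalized Burnside Theorem at all, which is the essential input; if you want to rescue your functorial approach, you would have to prove the $\ext$-comparison independently, which is at least as hard as the proposition itself.
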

\begin{proof}
Since $\sfam$ is a swarm of $A$-modules and of $B$-modules, we may
consider the commutative diagram
\begin{equation*}
\xymatrix{
A \ar[r]^-{\eta^A} \ar[rd]_-{\rho} & B = \mathcal O^A(\sfam)
\ar[d] \ar[r]^-{\eta^B} & C = \mathcal O^A(\sfam) \ar[dl] \\
& \displaystyle \osum_{1 \le i \le r} \enm_k(M_i) &
}
\end{equation*}
The algebra homomorphism $\eta^B$ induces maps $B/\rad(B)^n \to
C/\rad(C)^n$ for all $n \ge 1$. Since $k$ is algebraically closed and
$B/\rad(B)^n$ has finite dimension over $k$, it follows from the
Generalized Burnside Theorem that $B/\rad(B)^n \to C/\rad(C)^n$ is an
isomorphism for all $n \ge 1$. Hence $\eta^B$ is an isomorphism.
\end{proof}

In particular, the proposition implies that the assignment $(A, \sfam)
\mapsto (B, \sfam)$ is a closure operation when $k$ is algebraically
closed. In other words, the algebra $B = \mathcal O^A(\sfam)$ has the
following properties:
\begin{enumerate}
    \item The family $\sfam$ is the family the simple $B$-modules.
    \item The family $\sfam$ has exactly the same module-theoretic
        properties, in terms of (higher) extensions and Massey
        products, considered as a family of modules over $B$ as
        over $A$.
\end{enumerate}
Moreover, these properties characterizes the algebra $B = \mathcal
O^A(\sfam)$ of observables.

\section{Examples: Representations of ordered sets}

Let $k$ be an algebraically closed field, and let $\Lambda$ be a finite
ordered set. Then the algebra $A = k[\Lambda]$ is an associative
algebra of finite dimension over $k$. The category of right $A$-modules
is equivalent to the category of presheaves of vector spaces on
$\Lambda$, and the simple $A$-modules corresponds to the presheaves $\{
M_{\lambda}: \lambda \in \Lambda \}$ defined by $M_{\lambda}(\lambda) =
k$ and $M_{\lambda}(\lambda') = 0$ for $\lambda' \neq \lambda$. The
following results are well-known:
\begin{enumerate}
    \item If $\lambda > \lambda'$ in $\Lambda$ and $\{ \gamma \in
        \Lambda: \lambda > \gamma > \lambda' \} = \emptyset$, then
        $\ext^1_A(M_{\lambda}, M_{\lambda'}) \cong k$
    \item If $\{ \gamma \in \Lambda: \lambda \ge \gamma \ge
        \lambda' \}$ is a simple loop in $\Lambda$, then
        $\ext^2_A(M_{\lambda}, M_{\lambda'}) \cong k$
    \item In all other cases, $\ext^1_A(M_{\lambda}, M_{\lambda'})
        = \ext^2_A(M_{\lambda}, M_{\lambda'}) = 0$
\end{enumerate}

\subsection{A hereditary example}
Let us first consider the following ordered set. We label the elements
by natural numbers, and write $i \to j$ when $i > j$:
\begin{equation*}
\xymatrix{
1 \ar[dr] & 2 \ar[d] & 3 \ar[dl] \\
& 4 &
}
\end{equation*}
In this case, the simple modules are given by $\sfam = \{ M_1, M_2,
M_3, M_4 \}$, and we can easily compute the algebra $\mathcal
O^A(\sfam)$ of observables since $\ext^2_A(M_i,M_j) = 0$ for all $1 \le
i,j \le 4$. We obtain
    \[ \mathcal O^A(\sfam) = ( H_{ij} \widehat \otimes_k \hmm_k(M_i,M_j)
    ) \cong H \cong \left( \begin{matrix} k & 0 & 0 & k \\ 0 & k & 0 & k
    \\ 0 & 0 & k & k \\ 0 & 0 & 0 & k \end{matrix} \right) \]
It follows from the Generalized Burnside Theorem that $\eta: A \to
\mathcal O^A(\sfam)$ is an isomorphism. Hence we recover the algebra $A
\cong \mathcal O^A(\sfam) \cong H$.

\subsection{The diamond}
Let us also consider the following ordered set, called \emph{the
diamond}. We label the elements by natural numbers, and write $i \to j$
when $i > j$:
\begin{equation*}
\xymatrix{
& 1 \ar[dl] \ar[dr] & \\
2 \ar[dr] &  & 3 \ar[dl] \\
& 4 &
}
\end{equation*}
In this case, the simple modules are given by $\sfam = \{ M_1, M_2,
M_3, M_4 \}$. Since $\ext^2_A(M_1,M_4) \cong k$, we must compute the
cup-products
\begin{align*}
\ext^1_A(M_1,M_2) \cup \ext^1_A(M_2,M_4) & \to \ext^2_A(M_1,M_4) \\
\ext^1_A(M_1,M_3) \cup \ext^1_A(M_3,M_4) & \to \ext^2_A(M_1,M_4)
\end{align*}
in order to compute $H$. These cup-products are non-trivial; see Remark
3.2 in Laudal \cite{laud02} for details. Hence we obtain
    \[ \mathcal O^A(\sfam) = ( H_{ij} \widehat \otimes_k \hmm_k(M_i,M_j)
    ) \cong H \cong \left( \begin{matrix} k & k & k & k \\ 0 & k & 0 & k
    \\ 0 & 0 & k & k \\ 0 & 0 & 0 & k \end{matrix} \right) \]
Note that $H_{14}$ is two-dimensional at the tangent level and has a
relation. Also in this case, it follows from the Generalized Burnside
Theorem that $\eta: A \to \mathcal O^A(\sfam)$ is an isomorphism. Hence
we recover the algebra $A \cong \mathcal O^A(\sfam) \cong H$.

\bibliographystyle{plain}
\bibliography{defm-pz}

\end{document}